\def\SS{{{\mathbb S}}}
\def\RR{{\mathbb R}}
\tikzset{
	subset/.style={
		draw=none,
		edge node={node [sloped, allow upside down, auto=false]{$\subset$}}},
	Subset/.style={
		draw=none,
		every to/.append style={
			edge node={node [sloped, allow upside down, auto=false]{$\subset$}}}
	}
}
\tikzset{
	labl/.style={anchor=south, rotate=90, inner sep=.50mm}
}
\newcommand{\ci}{\mathds{C}}
\newcommand{\riem}{\operatorname{Riem}}
\newcommand{\ricc}{\operatorname{Ric}}
\newcommand{\weyl}{\operatorname{W}}
\newcommand{\diver}{\operatorname{div}}
\newcommand{\cp}{\ci\mathbb{P}}
\newcommand\restrict[1]{\raisebox{-.5ex}{$|$}_{#1}}
\newcommand{\bigslant}[2]{{\raisebox{.0em}{$#1$}\left/\raisebox{-.0em}{$#2$}\right.}}
\newcommand{\KN}{\mathbin{\bigcirc\mspace{-15mu}\wedge\mspace{3mu}}}
\newcommand{\longra}{\longrightarrow}
\newcommand{\set}[1]{{\left\{#1\right\}}}               
\newcommand{\pa}[1]{{\left(#1\right)}}                  
\newcommand{\sq}[1]{{\left[#1\right]}}                  
\newcommand{\abs}[1]{{\left|#1\right|}}                 
\newcommand{\pair}[1]{\left\langle#1\right\rangle}      
\newcommand{\metric}{\pair{\;, }}                          
\newcommand{\ol}[1]{\overline{#1}}
\renewcommand{\tilde}[1]{\widetilde{#1}}
\newcommand{\qd}[1]{\mathbin{{Q}_{#1}\mspace{-25mu}\tiny\raisebox{0.65ex}{$2$}\mspace{20mu}}}
\newcommand{\qt}[1]{\mathbin{{Q}_{#1}\mspace{-24.7mu}\tiny\raisebox{0.65ex}{$3$}\mspace{20mu}}}
\newcommand{\qq}[1]{\mathbin{{Q}_{#1}\mspace{-25.6mu}\tiny\raisebox{0.7ex}{$4$}\mspace{20mu}}}
\newcommand{\pmat}[1]{{\begin{pmatrix}#1\end{pmatrix}}} 
\newtheorem{theorem}{\textbf{Theorem}}[section]
\newtheorem{proposition}[theorem]{\textbf{Proposition}}
\newtheorem{cor}[theorem]{\textbf{Corollary}}
\theoremstyle{remark}
\newtheorem{rem}[theorem]{\textbf{Remark}}
\numberwithin{equation}{section}
\title[Einstein metrics and twistor spaces]
{A note on Einstein metrics and Riemannian twistor spaces}
\date{\today}
\keywords{Einstein metrics, positive scalar curvature, twistor
spaces, Ricci parallel manifolds}
\subjclass[2020]{53C28, 53C25, 53B21}
\begin{document}
	\maketitle
	
	\begin{center}
		\textsc{\textmd{D. Dameno \footnote{Dipartimento di Matematica "Giuseppe Peano", Universit\`{a} degli Studi di Torino, Via Carlo Alberto 10, 10123 Torino, Italy.
					Email: davide.dameno@unito.it.}}}
	\end{center}

	\begin{abstract} 
		Inspired by the problem of classifying Einstein manifolds
		with positive scalar curvature, we prove that an Einstein
		four-manifold whose associated twistor space
		has scalar curvature constant on the fibers of the twistor bundle
		is half conformally flat: in particular, the only compact 
		Einstein four-manifolds with positive scalar curvature 
		satisfying this twistorial condition are $\SS^4$ and $\cp^2$. We 
		also generalize a well-known result due to Friedrich and 
		Grunewald, providing a classification of complete four-manifolds 
		whose twistor space is Ricci parallel. 
	\end{abstract}

	\section{Introduction}
	
	Given a smooth manifold $M$, we say that a metric $g$ on $M$ is
	\emph{Einstein} if the Ricci tensor $\ricc$ is constant,
	i.e. if there exists $\lambda\in\RR$ such that $\ricc=\lambda g$.
	These metrics, which naturally arise in the context of General Relativity and instantons theory, 
	acquire additional importance in the four-dimensional
	case: first, in lower
	dimensions, the Einstein condition is completely understood,
	since it implies that the sectional curvature is constant,
	while in higher dimensions it seems to be less rigid (see, e.g., 
	\cite{boyer}).
	Furthermore, four-dimensional Riemannian manifolds 
	carry unique geometric features and the existence of Einstein
	metrics implies remarkable topological conclusions
	(see e.g. \cite{hitch, lebrunhit, thorpe}).
	
	In this paper, we are interested in
	Einstein four-manifolds with non-negative scalar curvature, whose 
	classification has been addressed by many authors: for instance,
	LeBrun classified these spaces in the compact case 
	requiring the existence of a 
	compatible complex or symplectic structure
	(\cite{lebrunsymp}), while Wu characterized positive
	Einstein metrics on simply connected four-manifolds, under the
	condition $\det\weyl^+\geq 0$, where $\weyl^+$ is the
	self-dual Weyl tensor (\cite{wudet}, then generalized in \cite{lebrun}),
	exploiting techniques derived in a seminal paper by Derdzi\'{n}ski
	(\cite{derd}). A beautiful rigidity result
	was obtained by Gursky (\cite{gursky}), by means of a sharp integral 
	inequality involving the Weyl tensor, later extended by the author,
	Branca, Catino and Mastrolia (\cite{bcdmpinch}).
	
	In this paper, we provide a new characterization of Einstein
	four-manifolds by means of Riemannian twistor theory: inspired
	by the well-known Penrose's programme, twistor spaces in the
	Riemannian context were introduced by Atiyah, Hitchin and Singer
	in their studies of self-dual solutions for Yang-Mills equations
	(\cite{athisin}). 
	The aim of 
	this approach was to interpret the conformal information of $(M,g)$
	in terms of Complex Geometry: indeed, Riemannian twistor spaces are
	\emph{conformally invariant} (\cite{athisin, debnan}) and they 
	admit a natural almost complex structure $J$ 
	which happens to be integrable
	if and only if the (anti-)self-dual Weyl tensor of $(M,g)$ identically
	vanishes (\cite{athisin}). This almost complex structure was 
	later studied by many authors: one of
	the most remarkable results is due to Hitchin (\cite{hitch2}), 
	who showed that, if $(M,g)$ is compact and 
	its associated twistor space admits a K\"{a}hler structure with
	respect to $J$, then $(M,g)$ is either $\cp^3$ or the flag manifold 
	$F_{1,2}$, respectively
	associated to $\SS^4$ and $\cp^2$ (for a more detailed
	discussion about classical results in this direction, 
	we refer the reader to the
	surveys \cite{davmusgran, lebruntour}). 
	
	Twistor spaces
	also admit a Riemannian structure, compatible with the complex one, 
	given by a family $\{g_t\}_{t>0}$ 
	of Riemannian metrics with respect to which 
	the twistor bundle projection
	becomes a Riemannian submersion with totally geodesic fibers 
	(\cite{besse}). 
	Many classification results were obtained analysing the 
	Riemannian/Hermitian structure of the twistor spaces (see e.g.
	\cite{davmussurv} and the references therein): however, very few
	results are known if one wants to consider Einstein four-manifolds
	which are not \emph{half conformally flat} (\cite{cdmnachr, reznikov}). 
	This is due to
	the fact that, in this case, the twistor space is only an almost Hermitian
	manifold: furthermore, the Einstein condition is not conformally 
	invariant. Nevertheless, here we provide a partial result 
	towards the classification of Einstein four-manifolds with
	positive scalar curvature, assuming a curvature condition
	on the associated twistor space: namely, we prove the following
	
	\begin{theorem} \label{constscaltwist}
		Let $(M,g)$ be an Einstein 
		four-manifold and $(Z,g_t)$ be its twistor space. If the 
		scalar curvature $\ol{S}$ of $(Z,g_t)$ is
		constant on the fibers of the twistor bundle, then 
		$(M,g)$ is self-dual.
	\end{theorem}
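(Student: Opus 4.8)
The plan is to read off $\ol S$ from the O'Neill formulas for the twistor submersion $\pi\colon Z\to M$ and to isolate the only fibre-dependent term. Recall that $Z$ is the unit sphere bundle in the rank-three bundle $\Lambda^-$ of anti-self-dual two-forms (with the orientation convention under which the conclusion reads as self-duality $\weyl^-\equiv 0$), that the fibres of $\pi$ are round $2$-spheres of radius $\sqrt t$, and that with respect to $g_t$ they are totally geodesic. For a Riemannian submersion with totally geodesic fibres the Gray--O'Neill identity reads
\begin{equation*}
\ol S=\pi^*S+S_F-\abs{A}^2,
\end{equation*}
where $S$ is the scalar curvature of $(M,g)$, $S_F=2/t$ is the (constant) scalar curvature of a fibre, and $A$ is the integrability tensor of the horizontal distribution. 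Since $(M,g)$ is Einstein, $S$ is constant, and $S_F$ is the same on every fibre; hence $\ol S$ is constant on the fibres if and only if $\abs{A}^2$ is.

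Next I would compute $\abs{A}^2$ at a point $(x,\omega)\in Z$, with $\omega\in\Lambda^-_x$, $\abs\omega=1$. The horizontal distribution is the one induced by the Levi-Civita connection on $\Lambda^-$, so that for horizontal lifts $2A_{X^h}Y^h=\big(R^\nabla(X,Y)\omega\big)^{\mathrm{v}}$, where $R^\nabla$ is the curvature of the induced connection and $(\cdot)^{\mathrm v}$ the vertical lift. Identifying $\Lambda^-$ with $\mathfrak{so}(3)$ one gets $R^\nabla(X,Y)\omega=[\mathcal R((X\wedge Y)^-),\omega]$, and here the Einstein condition is crucial: it makes the curvature operator $\mathcal R$ block-diagonal with respect to $\Lambda^+\oplus\Lambda^-$, so that only the endomorphism $P:=\weyl^-+\tfrac{S}{12}\,\id$ of $\Lambda^-$ enters. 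Summing over an orthonormal frame and using that the projections of $\{e_i\wedge e_j\}$ to $\Lambda^-$ resolve the identity, I expect
\begin{equation*}
\abs{A}^2(x,\omega)=c\,t\sum_{a=1}^3\mu_a^2\big(1-\langle\omega,f_a\rangle^2\big),
\end{equation*}
with $c>0$ universal, $\{f_a\}$ an eigenframe of $\weyl^-_x$ and $\mu_a=w_a+\tfrac{S}{12}$ the eigenvalues of $P$, where $w_a$ are those of $\weyl^-_x$. The fibre-dependent part is the degree-two spherical harmonic $-c\,t\sum_a\mu_a^2\langle\omega,f_a\rangle^2$, so $\ol S$ is constant on the fibre over $x$ precisely when $\mathrm{diag}(\mu_1^2,\mu_2^2,\mu_3^2)$ is a multiple of the identity, i.e. $\mu_1^2=\mu_2^2=\mu_3^2$.

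It then remains to solve this constraint pointwise. Writing $\mu_a=\pm m$ and using $\sum_a w_a=0$, hence $\sum_a\mu_a=\tfrac S4$, a short sign analysis leaves only two possibilities at each point: either $\weyl^-_x=0$, or $\weyl^-_x$ has eigenvalues $\big(\tfrac S6,\tfrac S6,-\tfrac S3\big)$. In the first alternative $(M,g)$ is self-dual and we are done, so everything reduces to excluding the second, \emph{degenerate}, alternative. I expect this to be the main difficulty, and the point where the Einstein hypothesis must be exploited a second time.

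To rule out the degenerate case I would argue as follows. If it occurred on the connected manifold $M$, then, $S$ being a nonzero constant, $\weyl^-$ would be nowhere zero with the repeated eigenvalue $\tfrac S6$ everywhere; in particular $\abs{\weyl^-}^2=\tfrac{S^2}{6}$ would be constant. By Derdzi\'nski's theorem an oriented Einstein four-manifold on which $\weyl^-$ is nowhere zero and has a repeated eigenvalue is conformal to a K\"ahler metric, the conformal factor being a power of $\abs{\weyl^-}$; as $\abs{\weyl^-}$ is constant this factor is constant, so $g$ itself is K\"ahler, hence K\"ahler--Einstein. But for a K\"ahler--Einstein surface the relevant half of the Weyl tensor has eigenvalues $\big(\tfrac S6,-\tfrac S{12},-\tfrac S{12}\big)$, whose repeated eigenvalue $-\tfrac S{12}$ differs from $\tfrac S6$ when $S\neq0$: this contradicts the pattern forced above, while for $S=0$ the eigenvalues vanish and $\weyl^-=0$ anyway. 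Hence the degenerate alternative cannot occur, $\weyl^-\equiv0$, and $(M,g)$ is self-dual. As a consistency check, both $\SS^4$ and $\cp^2$ satisfy $\weyl^-\equiv0$, so the twistorial hypothesis holds for them, in agreement with the classification announced in the introduction.
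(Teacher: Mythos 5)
Your argument is correct and is essentially the paper's own proof in a different notation: your Gray--O'Neill identity $\ol{S}=\pi^{*}S+S_F-\abs{A}^2$, with $\abs{A}^2$ the quadratic form in $\omega$ built from the eigenvalues of $\weyl^-+\frac{S}{12}\operatorname{Id}$, is exactly the paper's formula \eqref{scaltwist}, and your condition $\mu_1^2=\mu_2^2=\mu_3^2$ for fibrewise constancy is precisely what the paper extracts by rotating frames through the homomorphism $SO(4)\to SO(3)\times SO(3)$. The endgame is also identical: the trace constraint leaves the two spectra of \eqref{matrposs}, and the degenerate one is excluded via Derdzi\'{n}ski's theorem together with the fact that a K\"{a}hler--Einstein surface has $\weyl^-$-spectrum $\pa{S/6,-S/12,-S/12}$, forcing $S=0$ and hence $\weyl^-\equiv 0$ anyway.
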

	Now, we recall that Friedrich and Kurke proved that the 
	twistor space $(Z,g_t,J)$ of an oriented Riemannian four-manifold
	$(M,g)$ is K\"{a}hler-Einstein
	if and only if $(M,g)$ is Einstein and
	self-dual with positive scalar curvature equal to $12/t^2$
	\cite{frikur} (see also \cite{mus} and 
	\cite[Theorem 4.5]{cdmnachr} for some generalizations): 
	combining this with the aforementioned result by Hitchin, we are
	able to show the following
	\begin{cor} \label{corollaryeinst}
		Under the hypotheses of Theorem \ref{constscaltwist}, if 
		$(M,g)$ has positive scalar curvature $S$, then, up to a constant
		rescaling of $g$, 
		its twistor space $(Z,g_t,J)$ is a K\"{a}hler-Einstein manifold.
		In particular, if $M$ is compact, then $(M,g)$ is homotetically 
		isometric to 
		$\SS^4$ or $\cp^2$, both endowed with their standard metrics. 
	\end{cor}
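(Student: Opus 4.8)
The plan is to read the statement as an assembly of Theorem~\ref{constscaltwist} with the two classical results recalled just before it, namely the Friedrich--Kurke characterization of K\"{a}hler--Einstein twistor spaces and Hitchin's classification. To begin, I would apply Theorem~\ref{constscaltwist} verbatim: the manifold $(M,g)$ is Einstein by assumption and, by hypothesis, the scalar curvature $\ol{S}$ of $(Z,g_t)$ is constant on the fibers, so the theorem gives that $(M,g)$ is self-dual. Hence $(M,g)$ is simultaneously Einstein, self-dual, and, by the extra hypothesis of the corollary, has positive scalar curvature $S>0$.

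Next I would match the normalization demanded by Friedrich and Kurke, whose result requires the scalar curvature to equal $12/t^2$. Since $S>0$ and both the Einstein and the self-duality conditions are invariant under a constant rescaling $g\mapsto c\,g$ (which sends $S$ to $S/c$), I can choose $c=St^2/12>0$ and obtain a metric with scalar curvature exactly $12/t^2$; equivalently, one may keep $g$ unchanged and select the fiber parameter $t=\sqrt{12/S}$. With this normalization the Friedrich--Kurke theorem applies and yields that $(Z,g_t,J)$ is K\"{a}hler--Einstein, which is precisely the first conclusion of the corollary.

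For the second part I would assume $M$ compact, so that the total space $Z$ of the twistor bundle, an $\SS^2$-bundle over $M$, is compact as well. A K\"{a}hler--Einstein manifold is in particular K\"{a}hler, hence $(Z,J)$ carries a K\"{a}hler structure and Hitchin's theorem forces $Z$ to be $\cp^3$ or the flag manifold $F_{1,2}$; correspondingly $(M,g)$ is $\SS^4$ or $\cp^2$. To reach the stated homothety, I would invoke the rigidity underlying these two cases: a compact self-dual Einstein metric of positive scalar curvature on $\SS^4$ (resp.\ $\cp^2$) is, up to a constant factor, the round (resp.\ Fubini--Study) metric, so the rescaled metric---and hence $g$ itself---is homothetically isometric to the standard one.

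The step I expect to need the most attention is the normalization in the second paragraph: one should check that a constant rescaling of $g$ interacts with the one-parameter family $\{g_t\}_{t>0}$ of twistor metrics in the expected way, so that the hypotheses of Theorem~\ref{constscaltwist} and the Friedrich--Kurke scalar-curvature condition can be met for the same pair $(g,t)$, and that self-duality and the Einstein property are genuinely preserved under scaling. Once this bookkeeping is settled, the remainder is a direct concatenation of the quoted results.
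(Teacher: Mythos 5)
Your proposal is correct and follows essentially the same route as the paper: apply Theorem \ref{constscaltwist} to get self-duality, rescale $g$ (or equivalently adjust $t$) so that $S=12/t^2$ and invoke Friedrich--Kurke, then use Hitchin's classification for the compact case. The extra care you take with the normalization and the scale-invariance of the Einstein and self-duality conditions is sound bookkeeping that the paper leaves implicit.
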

	Theorem \ref{constscaltwist} 
	is obtained by explicit computations involving the curvature
	of the twistor space $Z$ (derived by the methods described in 
	\cite{jenrig} and already appearing in \cite{cdmagag, cdmnachr, 
	damenothesis}), the
	peculiar decomposition of the Riemann tensor in dimension four 
	and a well-known argument due to
	Derdzi\'{n}ski (\cite{derd}), translating the
	hypothesis on the scalar curvature $\ol{S}$ into a condition
	regarding the anti-self-dual Weyl tensor $\weyl^-$.  
	
	On the other hand, we deal with a generalization of the Einstein condition
	on the twistorial level: we recall that Friedrich and Grunewald
	classified Einstein twistor spaces $(Z,g_t)$, showing
	that this case occurs if and only if $(Z,g_t)$ is either
	$\cp^3$ or $F_{1,2}$, endowed with their standard K\"{a}hler-Einstein 
	structures or with their strictly nearly K\"{a}hler structures
	(\cite{frigru}). Here we analyse the \emph{Ricci parallel} condition,
	i.e. $\nabla\ricc\equiv 0$, which is satisfied by all Einstein
	manifolds (note that a classification result for
	Ricci parallel twistor spaces was obtained in 
	\cite{cdmagag}, under an extra assumption): exploiting the well-known
	fact that complete Ricci parallel manifolds split isometrically 
	as products of Einstein manifolds, we are able to obtain the 
	following
	
	\begin{theorem} \label{twistricpar}
		Let $(M,g)$ be a complete Riemannian four-manifold and 
		$(Z,g_t)$ its twistor space. Then, $(Z,g_t)$ is Ricci parallel
		if and only if $(Z,g_t)$ is Einstein or $(M,g)$ is self-dual
		and Ricci-flat. 
	\end{theorem}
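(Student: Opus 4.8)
The plan is to prove both implications, disposing of the easy "if" direction first and then reducing the "only if" direction to a pointwise-algebraic analysis on $M$ by means of the de Rham splitting. For the "if" direction, if $(Z,g_t)$ is Einstein then $\nabla\ol{\ricc}=\nabla(\bar\lambda g_t)=0$, so $Z$ is Ricci parallel. For the self-dual Ricci-flat case the key remark is that the rank-$3$ bundle $\Lambda^-$ underlying the twistor fibration (the one on which $\weyl^-$ acts and whose unit sphere bundle is $Z$) carries the Levi-Civita connection whose curvature is exactly the $\Lambda^-$-block of the Riemann operator, namely $\weyl^-+\tfrac{S}{12}\,\mathrm{Id}$ together with the off-diagonal traceless-Ricci coupling. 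Self-duality $\weyl^-=0$ and Ricci-flatness ($S=0$ and $\mathring{\ricc}=0$) annihilate this curvature, so $\Lambda^-$ is flat and $Z$ is locally the Riemannian product $M\times\SS^2_t$; a product of the Einstein factors (the Ricci-flat $M$ and the round sphere) is manifestly Ricci parallel.

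For the "only if" direction I would argue as follows. Assuming $(Z,g_t)$ Ricci parallel, completeness and the de Rham--Cheeger--Gromoll theorem split the universal cover isometrically as a product $\tilde Z=\prod_i\tilde Z_i$ of Einstein manifolds. The twistor fibers are compact, totally geodesic round $2$-spheres; being irreducible, each lies in a single factor $\tilde Z_1$, so $\mathcal V\subseteq T\tilde Z_1$. Writing $\mathcal H_1=\mathcal H\cap T\tilde Z_1$ and $a=\dim\mathcal H_1=\dim\tilde Z_1-2$, I would use that the O'Neill tensor $A_{\tilde X}\tilde Y$ equals the infinitesimal action of $R^-(X,Y)\in\mathfrak{so}(\Lambda^-)$ on the fiber point $\sigma$, where $R^-=P_{\Lambda^-}\!\circ R$. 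In a product $A$ takes values only along $\mathcal V\subseteq T\tilde Z_1$, which forces $R^-(X,Y)=0$ whenever $X$ or $Y$ is orthogonal to $\mathcal H_1$; equivalently $R^-$ is supported on $\Lambda^2\mathcal H_1$. Since the remaining factors $T\tilde Z_j$ ($j\ge2$) are horizontal, parallel, and annihilated by $A$, their projections are parallel distributions on $M$, so the base itself splits isometrically as $\tilde M=\tilde M_0^{\,a}\times\tilde M'^{\,4-a}$ with all of $R^-$ concentrated on $\Lambda^2\tilde M_0$.

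Two fiberwise consequences of $\nabla\ol{\ricc}=0$ then eliminate every intermediate $a$. First, since the fibers are totally geodesic $2$-spheres, the vertical block $\ol{\ricc}|_{\mathcal V}$ is parallel along each $\SS^2$ and hence must be a multiple of the metric---an anisotropic one would give a nowhere-zero parallel line field on $\SS^2$, impossible by Poincar\'e--Hopf. Through O'Neill this "vertical isotropy" becomes the algebraic identity $R^-(R^-)^*=\kappa\,\mathrm{Id}$ on $\Lambda^-$, i.e. $(\weyl^-+\tfrac{S}{12})^2+(\mathring{\ricc})^*\mathring{\ricc}=\kappa\,\mathrm{Id}$. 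Second, the horizontal block $\ol{\ricc}(\tilde X,\tilde X)=\ricc^M(X,X)-2|A_{\tilde X}|^2$ must be constant along each fiber, and its $\sigma$-dependence is $\tfrac12\,|\iota_X(R\sigma)|^2$, so this quadratic in $\sigma$ must be isotropic for every horizontal $X$. Feeding the product base into these two conditions rules out $a=1,2,3$: for $a=1$ one has $\Lambda^2\mathcal H_1=0$, hence $R^-\equiv0$ and $a$ collapses to $0$; for $a=2$ the support condition forces the second surface flat, whereupon vertical isotropy fails unless the first surface is flat as well; and for $a=3$ one computes $\weyl^-=0$ yet finds $|\iota_X(R\sigma)|^2$ genuinely non-constant on the fibers. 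Only $a=0$ and $a=4$ survive: $a=0$ gives $R^-\equiv0$, i.e. $\Lambda^-$ flat, i.e. $(M,g)$ self-dual and Ricci-flat, while $a=4$ gives $\tilde Z=\tilde Z_1$, i.e. $(Z,g_t)$ Einstein.

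The hard part will be organizing the elimination of the intermediate splittings cleanly rather than through a long case analysis: this requires the explicit O'Neill-type Ricci formulas for $(Z,g_t)$ from \cite{jenrig, cdmagag, cdmnachr} in order to translate $\nabla\ol{\ricc}=0$ into precisely the two fiberwise conditions above, followed by the check that vertical isotropy together with horizontal fiber-constancy is jointly incompatible with any product base except the flat one ($a=0$) and the non-split one ($a=4$). I expect the $a=2$ and $a=3$ computations, where one must track both $\weyl^-$ and $\mathring{\ricc}$ simultaneously, to be the most delicate; completeness is used only to produce the global de Rham product, after which the entire argument is pointwise-algebraic on $M$.
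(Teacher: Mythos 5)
Your overall strategy (de Rham splitting of $Z$, then an analysis of how the vertical distribution sits relative to the de Rham factors) is in the same spirit as the paper's, but the paper takes a much shorter route: it invokes the uniqueness part of the de Rham/Ricci-parallel splitting to identify the local product structure of $Z$ directly with the bundle trivialization $\pi^{-1}(U_p)\cong U_p\times F$, reads off from the resulting isometry that the connection forms $\theta^a_5,\theta^a_6$ vanish, hence $\qt{ab}=\qq{ab}=0$, and then uses the $SO(3)\times SO(3)$ transformation law \eqref{transflaw} to conclude $A=B=0$, i.e.\ self-dual and Ricci-flat (a second proof runs through the integrability of $\mathscr{H}$ via the O'Neill tensor). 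You instead allow the de Rham factors to sit arbitrarily relative to the fibration and propose to eliminate the intermediate configurations one by one.

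The problem is that, as written, your argument has genuine gaps precisely at its load-bearing points. First, the claim that each fiber, ``being irreducible, lies in a single factor $\tilde Z_1$'' is not justified: a totally geodesic, irreducible submanifold of a Riemannian product need not lie in a factor (diagonal embeddings, e.g.\ the diagonal $\SS^2\subset\SS^2\times\SS^2$, are totally geodesic and irreducible), so you must actually rule out the vertical $2$-plane being split or slanted across several de Rham factors. Second, and more seriously, the elimination of the cases $a=1,2,3$ --- which you yourself identify as ``the hard part'' --- is only asserted, not carried out: the statements that for $a=2$ ``vertical isotropy fails unless the first surface is flat as well'' and that for $a=3$ one ``finds $|\iota_X(R\sigma)|^2$ genuinely non-constant on the fibers'' are exactly the computations a proof would have to supply, and they are not reduced to anything verifiable here (nor is the translation of $\nabla\ol{\ricc}=0$ into your two ``fiberwise conditions'' established; Ricci-parallel is a priori stronger than the two consequences you extract, and conversely those consequences alone may not suffice). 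Until the single-factor claim is proved and the case analysis is actually executed, this is a plausible plan rather than a proof. The ``if'' direction (Einstein $\Rightarrow$ Ricci parallel; self-dual Ricci-flat $\Rightarrow$ $\Lambda_-$ flat $\Rightarrow$ $Z$ a local Riemannian product $\Rightarrow$ Ricci parallel) is fine.
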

	
	We provide two proofs of this result: one is obtained \emph{via} explicit
	computations of the Levi-Civita connection forms and the
	de Rham decomposition Theorem, while the other is based
	on a characterization of the integrability of the horizontal
	distribution of the Riemannian submersion given by the
	bundle projection. It is well-known that self-dual, Ricci-flat
	manifolds are \emph{locally hyperk\"{a}hler} and they are
	of fundamental importance in the theory of 
	gravitational instantons; furthermore, if we assume that $M$ is compact,
	$(M,g)$ has Ricci-parallel twistor space if and only if it is,
	up to quotients, $\SS^4$, $\cp^2$, a flat torus or a K3 surface
	(\cite{frikur, hitch}).  
	
	\section{Preliminaries and notation}
	In this section, we recall the basic properties
	of four-dimensional Riemannian manifolds and their
	associated twistor spaces, exploiting
	the moving frame methods: for a more
	detailed dissertation of these subjects, we might
	refer the reader to \cite{alimasrig, besse, cmbook, 
	damenothesis, jenrig, salamon}.
	Throughout the paper, we adopt Einstein's summation
	convention over repeated indices. 

	Let $(M,g)$ be 
	a Riemannian manifold of dimension $n$, which
	we will always assume connected and oriented: if 
	$(U,x^1,...,x^n)$ is a local open chart of $M$, 
	we can always define a \emph{local orthonormal coframe} on $U$,
	i.e. a set $\{\omega^i\}_{i=1}^n$ of $1$-forms which
	are orthonormal with respect to the inner product induced
	by $g$ on $\Lambda^1$, in order to have that $g$ can be
	expressed as $g=\delta_{ij}\omega^i\otimes\omega^j$. 
	The Levi-Civita connection can be locally represented 
	by a set $\set{\omega_j^i}_{i,j=1}^n$ of $1$-forms
	satisfying the \emph{Cartan's first structure equation}
	\begin{equation} \label{cartanfirst}
		d\omega^i=-\omega_j^i\wedge\omega^j
	\end{equation} 
	and $\omega_j^i+\omega_i^j=0$: these are the so-called
	\emph{Levi-Civita connection forms} and
	can be seen as an $\mathfrak{so}(n)$-valued 1-form, where
	$\mathfrak{so}(n)$ is the Lie algebra of $O(n)$ or,
	equivalently, of $SO(n)$
	(\cite{alimasrig, spivak}). The exterior derivative
	of the Levi-Civita connection forms allows us
	to define the \emph{curvature forms}, which
	are $2$-forms $\Omega_j^i$ that encompass all the local
	information about the curvature: indeed, they appear in
	the \emph{Cartan's second structure equation}
	\begin{equation} \label{cartansecond}
		d\omega_j^i=-\omega_k^i\wedge\omega_j^k+\Omega_j^i
	\end{equation}
	and it can be shown that (\cite{cmbook})
	\begin{equation} \label{riemcompforms}
		\Omega_j^i=\dfrac{1}{2}R_{jkl}^i\omega^k\wedge\omega^l,
	\end{equation}
	where $R_{jkl}^i=R_{ijkl}$ are the local components of the
	Riemann curvature tensor with respect to $\{\omega^i\}$
	(the equality on the components holds since the
	chosen coframe is orthonormal). It is
	well known that, if $n\geq 3$, 
	the Riemann curvature tensor can be
	decomposed into a sum of orthogonal pieces (in fact,
	the space of algebraic curvature tensors splits
	into a sum of orthogonal subspaces under the action of
	$O(n)$ \cite{besse}):
	\begin{equation}\label{riemdeco}
		\riem=\weyl+\dfrac{1}{n-2}\operatorname{E}
		\KN g-\dfrac{S}{2n(n-1)}g\KN g,
	\end{equation}
	where $\weyl$ is the \emph{Weyl tensor}, $\operatorname{E}:=
	\ricc-\frac{S}{n}g$ is the 
	\emph{traceless Ricci tensor}, $S$ is the \emph{scalar curvature}
	and $\KN$ is the so-called \emph{Kulkarni-Nomizu product}.
	
	Now, let $n=4$: the action of
	the Hodge $\star$ operator on the bundle $\Lambda^2$ of 
	2-forms induces the well-known splitting
	\begin{equation} \label{lambdasplit}
	\Lambda^2=\Lambda_+\oplus\Lambda_-,
	\end{equation}
	where $\Lambda_+$ (resp., $\Lambda_-$) is the
	bundle of \emph{self-dual} (resp. \emph{anti-self-dual})
	forms, meaning that $\eta\in\Lambda_{\pm}$ if 
	$\star\eta=\pm\eta$. Given a local orthonormal
	coframe $\{\omega^i\}_{i=1}^4$ defined on a local
	chart $U\subset M$, we can locally define the
	\emph{Riemann curvature operator} as
	\begin{align} \label{riemcurvop}
		\mathcal{R}&:\Lambda^2\longra\Lambda^2 \\
		\mathcal{R}(\eta)&=\dfrac{1}{4}R_{ijkl}\eta_{kl}\omega^i\wedge\omega^j, \notag
	\end{align}
	where $\eta=\frac{1}{2}\eta_{ij}\omega^i\wedge\omega^j$: similarly,
	we can define the linear operators
	\begin{align}
		\mathcal{W}(\eta)&=
		\dfrac{1}{4}W_{ijkl}\eta_{kl}\omega^i\wedge\omega^j, \label{weylop}\\
		\mathcal{T}(\eta)&=\dfrac{1}{4}\pa{\operatorname{E}\KN g}_{ijkl}
		\eta_{kl}\omega^i\wedge\omega^j, \label{riccop}
	\end{align}
	where $W_{ijkl}$ are the components of $\weyl$. 
	A simple observation shows
	that
	\[
	\mathcal{W}(\Lambda_{\pm})\subset\Lambda_{\pm} \quad\mbox{and}\quad
	\mathcal{T}(\Lambda_{\pm})\subset\Lambda_{\mp}:
	\]
	since $\mathcal{W}$ preserves the subbundles of $\Lambda^2$, 
	this induces the splitting
	\[
	\mathcal{W}=\mathcal{W}^++\mathcal{W}^-, \quad\mathcal{W}^{\pm}=
	\dfrac{1}{2}\sq{\mathcal{W}\pm\star\mathcal{W}}
	\in\operatorname{End}(\Lambda_{\pm}),
	\]
	which provides the well-known decomposition
	\begin{equation} \label{weyldeco}
		\weyl=\weyl^++\weyl^-.
	\end{equation}
	We say that $\weyl^+$ (resp. $\weyl^-$) is the
	\emph{self-dual} (resp. \emph{anti-self-dual}) part of
	the Weyl tensor: $(M,g)$ is said to be 
	\emph{self-dual} (resp. \emph{anti-self-dual}) if
	$\weyl^-\equiv 0$ (resp. $\weyl^+\equiv 0$) on $M$ and
	\emph{half conformally flat} if one part of $\weyl$
	vanishes identically on $M$. 
	
	The splitting of $\Lambda^2$ henceforth 
	induces a decomposition of the Riemann curvature operator
	\eqref{riemcurvop}, which, by, \eqref{riemdeco},
	\eqref{weylop} and \eqref{riccop},
	can be 
	visualized as the symmetric matrix
	\begin{equation} \label{riemoperator}
	\mathcal{R}=
	\left(
	\begin{array}{cc}
		\mathcal{W}^++\frac{S}{12}\operatorname{Id}_{\Lambda_+} & \frac{1}{2}
		\mathcal{T}\restrict{\Lambda_-} \\
		\frac{1}{2}
		\mathcal{T}\restrict{\Lambda_+} & 
		\mathcal{W}^-+\frac{S}{12}\operatorname{Id}_{\Lambda_-}
	\end{array}
	\right).
	\end{equation}
	Locally,
	with respect to the chosen local orthonormal coframe,
	we can express $\mathcal{R}$ as
	\begin{equation} \label{riemmatrix}
	\mathcal{R}=
	\left(
	\begin{array}{cc}
		A & B^T \\
		B & C
	\end{array}
	\right), 
	\end{equation}
	where $A$, $B$ and $C$ are obtained by evaluating $\mathcal{R}$
	at the standard bases of $\Lambda_{\pm}$ and 
	depend on the components of $\riem$ (the
	explicit expressions can be found, e.g., in \cite{cdmnachr, damenothesis, 
	jenrig}). By \eqref{riemoperator} and \eqref{riemmatrix}, 
	we obtain a nice characterization of
	half conformally flat and Einstein metrics: 
	\begin{proposition} \label{charactmetric}
		Let $(M,g)$ be a Riemannian manifold of dimension $n=4$. Then, 
		\begin{itemize}
			\item $(M,g)$ is self-dual if and only 
			if $C=(S/12)I_3$ (resp. $A=(S/12)I_3)$
			at every point $p\in M$ for every positively (resp. negatively)
			oriented local orthonormal frame $\{e_i\}$ around $p$;
			\item $(M,g)$ is 
			anti-self-dual if and only if 
			$A=(S/12)I_3$ (resp. $C=(S/12)I_3)$
			at every point $p\in M$ for every positively (resp. negatively)
			oriented local orthonormal frame $\{e_i\}$ around $p$;
			\item $(M,g)$ is an Einstein manifold
			if and only if $B=0$ at every $p\in M$ 
			for every local orthonormal frame
			around $p$.
		\end{itemize}
	\end{proposition}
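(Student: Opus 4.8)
The plan is to obtain all three equivalences by directly matching the two expressions \eqref{riemoperator} and \eqref{riemmatrix} for the curvature operator $\mathcal{R}$, which represent the same symmetric endomorphism of $\Lambda^2$ relative to the splitting $\Lambda^2=\Lambda_+\oplus\Lambda_-$. Comparing the block entries, I first identify
\[
A=\mathcal{W}^++\tfrac{S}{12}\operatorname{Id}_{\Lambda_+},\quad
C=\mathcal{W}^-+\tfrac{S}{12}\operatorname{Id}_{\Lambda_-},\quad
B=\tfrac{1}{2}\mathcal{T}\restrict{\Lambda_+},
\]
so that $A$, $B$, $C$ are the $3\times 3$ matrices obtained in the standard bases of the relevant subbundles, where $\operatorname{Id}_{\Lambda_\pm}$ becomes $I_3$. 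Since $\mathcal{W}^\pm$ are traceless (the Weyl tensor being trace-free), the term $\tfrac{S}{12}I_3$ is the full trace part of each diagonal block; indeed $\operatorname{tr}A=\tfrac{S}{4}$, so that $\mathcal{W}^+$ is exactly the traceless part of $A$, and similarly $\mathcal{W}^-$ is the traceless part of $C$.

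For the first two items I would use that, by definition, $(M,g)$ is self-dual precisely when $\mathcal{W}^-\equiv 0$ and anti-self-dual precisely when $\mathcal{W}^+\equiv 0$. By the preceding identification, $\mathcal{W}^-=0$ is equivalent to $C$ being a multiple of $I_3$, and the multiple is forced by the trace to be $\tfrac{S}{12}$, i.e. $C=\tfrac{S}{12}I_3$; symmetrically $\mathcal{W}^+=0$ is equivalent to $A=\tfrac{S}{12}I_3$. These are pointwise identities between the tensorial objects $\mathcal{W}^\pm$, so their validity is independent of the chosen positively oriented orthonormal frame.

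Next, for the Einstein characterization, I would recall that $\mathcal{T}$ is the curvature operator associated with $\operatorname{E}\KN g$ in exactly the sense \eqref{riccop} parallels \eqref{riemcurvop}; hence $\mathcal{T}=0$ if and only if $\operatorname{E}\KN g=0$, since $\operatorname{E}\KN g$ is already antisymmetric in both index pairs. A short computation tracing the Kulkarni--Nomizu product against $g$ in dimension four gives $(\operatorname{E}\KN g)_{ijkl}\,g^{jl}=(n-2)\operatorname{E}_{ik}=2\operatorname{E}_{ik}$, so that $\operatorname{E}\KN g=0$ forces $\operatorname{E}=0$, that is, $(M,g)$ is Einstein. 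As $B=\tfrac{1}{2}\mathcal{T}\restrict{\Lambda_+}$ and $B^T=\tfrac{1}{2}\mathcal{T}\restrict{\Lambda_-}$ together encode the whole off-diagonal operator $\mathcal{T}$, the vanishing $B=0$ is equivalent to $\mathcal{T}=0$, hence to the Einstein condition, and again this is frame-independent.

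Finally, for the parenthetical (``resp.'') clauses I would observe that reversing the orientation of $M$ changes the sign of the Hodge $\star$ on $2$-forms, thereby interchanging $\Lambda_+$ with $\Lambda_-$; consequently the diagonal blocks $A$ and $C$ in \eqref{riemmatrix} exchange places. Thus the condition $C=\tfrac{S}{12}I_3$ characterizing self-duality for a positively oriented frame turns into $A=\tfrac{S}{12}I_3$ for a negatively oriented one, and symmetrically for anti-self-duality; the Einstein condition $B=0$ is insensitive to this swap. The proposition is therefore essentially a direct read-off from matching \eqref{riemoperator} with \eqref{riemmatrix}, and I expect the only genuinely delicate point to be precisely this bookkeeping of orientations, combined with the remark that $\mathcal{W}^\pm$ and $\mathcal{T}$ are tensorial so that the stated conditions hold for one frame of a given orientation if and only if they hold for all of them.
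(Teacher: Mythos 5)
Your proposal is correct and follows exactly the route the paper intends: the paper gives no separate proof, simply asserting that the proposition follows "by \eqref{riemoperator} and \eqref{riemmatrix}", and your block-by-block identification of $A$, $B$, $C$ with $\mathcal{W}^{+}+\frac{S}{12}\operatorname{Id}$, $\frac{1}{2}\mathcal{T}\restrict{\Lambda_+}$, $\mathcal{W}^{-}+\frac{S}{12}\operatorname{Id}$, together with the trace of the Kulkarni--Nomizu product and the orientation swap of $\Lambda_{\pm}$, is precisely the omitted verification. No gaps.
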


	Now, we introduce the notion of \emph{Riemannian twistor space} associated
	to a four-manifold: this is defined as 
	the associated bundle (see \cite{kobnom} for
	a complete dissertation about this construction)
	\begin{equation}
		Z:=O(M)_-\times_{SO(4)}\left(
		\bigslant{SO(4)}{U(2)}\right)=\bigslant{O(M)_-}{U(2)},
	\end{equation}
	where $O(M)_-$ is the principal $SO(4)$-bundle of negatively
	oriented local orthonormal frames over $(M,g)$ and $U(2)$ is
	a Lie subgroup of $SO(4)$ isomorphic to the unitary group. 
	This construction can
	be generalized starting from every Riemannian manifold of
	even dimension (\cite{debnan, jenrig}): furthermore, 
	the choice of negatively oriented frames is just a convention, since
	we can equivalently define the twistor space $Z$ starting from
	the positively oriented frame bundle $O(M)_+$. We will denote
 	the bundle projection $Z\longra M$ as $\pi$ and the generic fiber 
 	$SO(4)/U(2)\cong\cp^1$ of the twistor bundle as 
 	$F$:
	by definition, $Z$ is a smooth manifold of dimension $6$ and 
	it can be shown that the fibers of the twistor bundle 
	parametrize the $g$-orthogonal complex structures on the
	tangent spaces of $M$ (\cite{athisin}).
	
	The Levi-Civita connection induces a splitting of the tangent
	space of $O(M)_-$ into a horizontal subspace $H_e$ and a vertical 
	subspace $V_e$ 
	at every point $e\in O(M)_-$, 
	which is preserved on $Z$: this 
	means that, if $\sigma:O(M)_-\longra Z$ is the bundle
	projection of the principal $U(2)$-bundle over $Z$ given by $O(M)_-$,
	then, at every $q\in Z$, 
	\[
	T_qZ=\mathscr{H}_q\oplus\mathscr{V}_q:=
	\sigma_{\ast e}(H_e)\oplus\sigma_{\ast e}(V_e),
	\]
	where $e\in\sigma^{-1}(q)$. The collections of 
	the subspaces $\mathscr{H}_q$ and 
	$\mathscr{V}_q$ define smooth distributions of $TZ$, called, respectively,
	horizontal and vertical distributions. 
	
	It is well-known that there exists a natural $1$-parameter family
	of Riemannian metrics on $Z$ (see, e.g., 
	\cite{damenothesis, davmussurv, debnan, jenrig}) as
	\begin{equation} \label{metrictwist}
		g_t=\pi^{\ast}(g)+t^2\metric,
	\end{equation}
	where $\metric$ denotes the standard $SO(4)$-invariant
	metric on the homogeneous space $F$ (\cite{debnan}).
	Given
	a local orthonormal coframe $\{\omega^i\}$ on $M$ 
	(seen as a pullback of the tautological $1$-form on $O(M)$
	\emph{via} a smooth section) and the corresponding
	Levi-Civita connection forms $\{\omega_j^i\}$, we can locally write
	the metrics $g_t$ in \eqref{metrictwist} as (\cite{cdmnachr,jenrig})
	\begin{equation} \label{metriclocal}
		g_t=\sum_{a=1}^4\pa{\theta^a}^2+4t^2\sq{\pa{\theta^5}^2+
		\pa{\theta^6}^2},
	\end{equation}
	where 
	\begin{equation} \label{twistcoframe}
		\theta^a:=\omega^a, \mbox{ } a=1,...,4, \quad
		\theta^5:=t\pa{\omega_3^1+\omega_2^4}, \quad
		\theta^6:=t\pa{\omega_4^1+\omega_3^2}
	\end{equation}
	is a local orthonormal coframe on $Z$ such that, if 
	$\{\bar{e}_p\}_{p=1}^6$ is the dual local frame, $\bar{e}_a$, $a=1,...,4$,
	are horizontal vector fields and $\bar{e}_5,\bar{e}_6$ are vertical 
	vector fields (for the sake of simplicity, we omit the pullback
	notation in \eqref{metriclocal} and \eqref{twistcoframe},
	as in \cite{jenrig}). 
	
	By means of \eqref{twistcoframe} and \eqref{cartanfirst}, 
	we can compute the 
	Levi-Civita connection forms as
	\begin{align} \label{twistconnform}
		\theta_b^a&=\omega_b^a+\dfrac{t}{2}
		\pa{\qt{ba}\theta^5+\qq{ba}\theta^6}, \\
		\theta_a^5&=\dfrac{t}{2}\qt{ab}\theta^b, \notag\\
		\theta_a^6&=\dfrac{t}{2}\qq{ab}\theta^b, \quad
		a=1,...,4 \notag,
	\end{align}
	where $\qt{ab}:=R_{13ab}+R_{42ab}$ and $\qq{ab}:=R_{14ab}+R_{23ab}$;
	using \eqref{twistconnform} and \eqref{cartansecond}, one
	can compute all the local components of the curvature tensors
	of $(Z,g_t)$ (see the Appendices of \cite{cdmagag} for a full
	description). 
	
	An important fact about the metrics $g_t$ is that the
	twistor projection $\pi:(Z,g_t)\longra (M,g)$ becomes
	a \emph{Riemannian submersion}, i.e. the horizontal
	tangent spaces $\mathscr{H}_q$ are isometric to the
	tangent spaces $T_{\pi(q)}M$ \emph{via} $\pi_{\ast}$:
	moreover, this submersion has \emph{totally geodesic fibers}, 
	since a simple computation using \eqref{twistconnform} shows
	that the tensor $\operatorname{T}$ introduced by O'Neill
	(\cite{oneill}), which
	is precisely the second fundamental form of each fiber, 
	identically vanishes on $Z$. 
	
	\section{Proof of Theorem \ref{constscaltwist}}
		First, recall that, by construction, the fibers of
		the twistor bundle parametrize local orthonormal (co)frames
		on $M$, up to a $U(2)$-action. It is well known that
		matrices in $SO(4)$ represent smooth, orientation-preserving
		changes of (co)frames on a four-dimensional oriented manifold:
		therefore, there exists a natural action of $SO(4)$ on 
		$\Lambda^2$, locally given by 
		\begin{align*}
			SO(4)\times\Lambda^2&\longra\Lambda^2\\
			(a,\omega^i\wedge\omega^j)&\longmapsto
			a(\omega^i)\wedge a(\omega^j),
		\end{align*}
		where $a(\omega^i):=(a^{-1})_j^i\omega^j$ is the natural
		action of $SO(4)$ on $\Lambda^1$. We want to restrict 
		this action on the subbundles $\Lambda_{\pm}$: first,
		we note that $\mathfrak{so}(4)$ is the
		only semisimple Lie algebra of an orthogonal group, since
		there exist two proper ideals of $\mathfrak{so}(4)$, both isomorphic
		to $\mathfrak{so}(3)$, i.e.
		\[
		\mathfrak{so}(4)\cong\mathfrak{so}(3)\oplus\mathfrak{so}(3)
		\]
		(recall that $\mathfrak{so}(n)$ is a simple Lie algebra if
		$n\neq 4$). It is easy to show that $\mathfrak{so}(4)\cong
		\Lambda^2$ and that, consequently, 
		$\mathfrak{so}(3)\cong\Lambda_{\pm}$: by standard theory of 
		coverings, there exists a $2:1$ surjective Lie group homomorphism
		\begin{align} \label{homom}
			\mu:SO(4)&\longra SO(3)\times SO(3)\\
			a&\longmapsto (a_+,a_-), \notag
		\end{align}
		where $a_{\pm}$ is the restriction of the action of $a$ on
		$\Lambda^2$ to $\Lambda_{\pm}$ (for a full dissertation,
		see \cite{besse, salamon}). The curvature forms $\Omega_j^i
		\in\Lambda^2$
		defined by \eqref{cartansecond} transform as
		\[
		\tilde{\Omega}_j^i=(a^{-1})_k^i\Omega_l^ka_j^l
		\]
		under the smooth change of frames defined by $a\in SO(4)$:
		the restriction \eqref{homom} induces analogous transformation
		laws for the blocks of \eqref{riemmatrix}, i.e.
		\begin{equation} \label{transflaw}
			\tilde{A}=a_+^{-1}Aa_+, \quad
			\tilde{B}=a_-^{-1}Ba_+, \quad \tilde{C}=a_-^{-1}Ca_-,
		\end{equation}
		where $(a_+,a_-)=\mu(a)$.
		
		Now, recall the expression of the scalar curvature $\ol{S}$
		of $(Z,g_t)$ in terms of the Riemann 
		curvature tensor of $M$ (\cite{cdmagag, jenrig}):
		\begin{equation} \label{scaltwist}
			\ol{S}=S+\dfrac{2}{t^2}-\dfrac{t^2}{4}(\abs{\qt{ab}}^2+\abs{\qq{ab}}^2),
		\end{equation}
		where $\abs{\qt{ab}}^2=\sum_{a,b=1}^4\qt{ab}\qt{ab}$ and
		$\abs{\qq{ab}}^2=\sum_{a,b=1}^4\qq{ab}\qq{ab}$.
		Since $S$ is constant by hypothesis and 
		$\ol{S}$ is constant on the fibers of the twistor bundle, 
		we obtain that 
		$\abs{\qt{ab}}^2+\abs{\qq{ab}}^2$ does not
		depend on the chosen local orthonormal frame on $M$; 
		hence, at every point
		$p\in M$, 
		we can choose a local orthonormal frame such that $A$ is in diagonal form.
		
		Now, since $M$ is Einstein, a simple computation shows that
		\[
		\abs{\qt{ab}}^2=4(A_{22})^2 \quad \mbox{ and } \quad
		\abs{\qq{ab}}^2=4(A_{33})^2;
		\] 
		therefore, \eqref{scaltwist} becomes
		\[
		A_{22}^2+A_{33}^2=\dfrac{1}{t^2}\pa{S+\dfrac{2}{t^2}-\ol{S}}.
		\]
		Since the right-hand side does not depend on the chosen frame, we can 
		rotate the frame with a suitable choice of 
		$a_+$ and use the
		transformation laws \eqref{transflaw} 
		in order to obtain the equation
		\[
		\tilde{A}_{22}^2+
		\tilde{A}_{33}^2=
		A_{11}^2+A_{33}^2=\dfrac{1}{t^2}\pa{S+\dfrac{2}{t^2}-\ol{S}},
		\]
		which implies $A_{11}^2=A_{22}^2$: for instance,
		it suffices to choose a smooth change of frames, determined
		by a matrix $a\in SO(4)$, such that the corresponding
		restriction $a_+\in SO(3)$ defined by
		the homomorphism $\mu$ in \eqref{homom}
		is
		\[
		a_+=\pmat{
		0 & -1 & 0\\
		1 & 0 & 0\\
		0 & 0 & 1
		}.
		\]
		A similar line of reasoning allows us to
		conclude that
		\[
		A_{11}^2=A_{22}^2=A_{33}^2.
		\] 
		We know that $\operatorname{tr}A=S/4$, which means that, up to
		frame rotations, we have just two 
		possibilities:
		\begin{equation} \label{matrposs}
		A=\pmat{
			\dfrac{S}{12} & 0 & 0\\
			0 & \dfrac{S}{12} & 0\\
			0 & 0 & \dfrac{S}{12}}
		\quad \mbox{ or } \quad
		A=\pmat{
			-\dfrac{S}{4} & 0 & 0\\
			0 & \dfrac{S}{4} & 0\\
			0 & 0 & \dfrac{S}{4}
		}.
		\end{equation}
		Now, let us assume that there exists $p\in M$ such that 
		the operator $A$ has eigenvalues $-S/4$, $S/4$, $S/4$, i.e. the 
		matrix $A$ can be diagonalized as in the second case of 
		\eqref{matrposs}: this means that the set
		\[
		\mathcal{A}:=\{p\in M\mbox{ : } \abs{\weyl}^-\neq 0\}
		\]
		is non-empty. Given a linear, symmetric operator $T:
		\Lambda^2\longra\Lambda^2$, we denote 
		the set of distinct eigenvalues of $T$ as 
		$\operatorname{spec}(T)$: note that, since $A$ can only be 
		diagonalized in two ways, according to \eqref{matrposs}, 
		we have $\#\operatorname{spec}(\weyl^-)=\#\operatorname{spec}(A)
		\leq 2$ on $M$. 
		By hypothesis, $(M,g)$ is an Einstein manifold and,
		therefore, $\diver\pa{\weyl^-}=0$: a celebrated result
		due to Derdzi\'{n}ski states that the Riemannian metric
		\[
		\tilde{g}=\pa{24\abs{\weyl_g^-}_g^2}^{\frac{1}{3}}g,
		\]
		which is conformal to $g$, 
		is a K\"{a}hler metric on $\mathcal{A}$, with
		respect to the opposite orientation of $M$ (\cite{derd}),
		and that the K\"{a}hler form of $\tilde{g}$ corresponds
		to the single eigenvalue of $\weyl^-$. Moreover,
		since $(M,g)$ is Einstein, the same result assures that 
		$\mathcal{A}=M$: since, in our case, $\abs{\weyl^-}^2$ is 
		a constant function, we conclude that $g$ is a K\"{a}hler-Einstein
		metric on $M$. However, this is impossible: indeed, 
		since $g$ is not a self-dual metric by assumption, the eigenvalues
		of $\weyl^-$ for such a metric must be 
		$S/6$, $-S/12$, $-S/12$ (\cite{derd, gray}), which would imply
		that the eigenvalues of $A$ are $S/4$, $0$, $0$: this
		could only be possible if $S=0$, but this is in contradiction
		with the fact that $(M,g)$ is not self-dual. Hence, the
		second case in \eqref{matrposs} cannot occur, which means
		that $(M,g)$ is a self-dual manifold and this concludes
		the proof. 
	\begin{rem}
		The last part of the proof can be rewritten in the following way:
		by \cite[Proposition 5]{derd}, we know that, for every K\"{a}hler
		metric on a four-manifold, $\operatorname{sign}(S)=
		\operatorname{sign}\pa{\operatorname{det}\weyl^-}$, 
		showing immediately
		that the second case in \eqref{matrposs} occurs if and only
		if $S=0$. 
	\end{rem}
	\begin{rem}
		Let $\ol{S}$ be constant on $Z$: a straightforward computation
		shows that, on $M$, with respect to a negatively oriented frame,
		\[
		\abs{\weyl^-}^2=\abs{\qd{ab}}^2+\abs{\qt{ab}}^2+
		\abs{\qq{ab}}^2-\dfrac{S^2}{12},
		\] 
		where $\qd{ab}=R_{12ab}+R_{34ab}$ and 
		$\abs{\qd{ab}}^2=\sum_{a,b=1}^4\qd{ab}\qd{ab}$. 
		By \eqref{scaltwist} and using \eqref{transflaw}
		with suitable choices of $a_+$, it is easy to show that
		$\abs{\weyl}^2$ is a constant function on $M$. 
		Recall that Einstein four-manifolds satisfy the
		following Bochner-Weitzenb\"{o}ck formula (\cite{cmbook, derd}):
		\begin{equation} \label{bochweitz}
			\dfrac{1}{2}\Delta\abs{\weyl^-}^2=
			\abs{\nabla\weyl^-}^2+\dfrac{S}{2}\abs{\weyl^-}^2-
			18\det\weyl^-
		\end{equation}
		(in fact, this is true more generally for \emph{half
		harmonic Weyl} metrics, i.e. the ones such that
	$\diver\weyl^-\equiv 0$ on $M$).
		If $S$ is positive, then, since $\abs{\weyl^-}^2$ is constant,
		by \eqref{bochweitz} we immediately obtain that
		$\det\weyl^-\geq 0$: hence, we can exploit a well-known result
		due to Wu (\cite{wudet}) to conclude that the universal
		cover of $(M,g)$
		is either self-dual or conformally K\"{a}hler with respect
		to a complex structure compatible with the opposite orientation.
		However, the second case is impossible due to the hypothesis
		on $\ol{S}$, as deduced in the proof of Theorem \ref{constscaltwist},
		hence we obtain again that $(M,g)$ is self-dual.
	\end{rem}
	Now, it is possible to easily prove Corollary \ref{corollaryeinst}:
	\begin{proof}[Proof of Corollary \ref{corollaryeinst}]
		Up to rescaling $g$ by a positive constant, we may assume that $S=12/t^2$, which,
		by Friedrich and Kurke's result \cite{frikur}, implies
		that $(Z,g_t,J)$ is a K\"{a}hler-Einstein manifold with positive 
		scalar curvature $\ol{S}=12/t^2$ (it is apparent from
		\eqref{scaltwist}). The compact case follows immediately from
		\cite{frikur, hitch2}.
	\end{proof}
	\begin{rem}
		A long-standing conjecture asserts that 
		the only compact 
		Einstein four-manifolds with positive \emph{sectional curvature}
		are $\SS^4$ and $\cp^2$, both endowed with their standard metrics: in
		other words, by \cite{frikur, hitch2}, 
		a compact Einstein four-manifold with positive sectional 
		curvature
		has to be half conformally flat. 
		In this direction, it is worth mentioning the work of Gursky and LeBrun,
		who characterized $\cp^2$ under a cohomological assumption
		(\cite{gurskylebrun}), and Yang (\cite{yang}), who proved the conjecture 
		assuming a pinching condition on the sectional curvature 
		which was later improved (see e.g. \cite{caotran, costa, 
			ribeiro}). We observe
		that Theorem \ref{constscaltwist} and Corollary 
		\ref{corollaryeinst} allow us to rephrase
the conjecture about Einstein four-manifolds with positive
sectional curvature as follows: given a compact Einstein manifold $(M,g)$
of dimension four with positive sectional curvature, then
its twistor space $(Z,g_t)$ has constant scalar curvature. We recall that,
to the best of our knowledge, the only result in the literature concerning
twistor spaces of Einstein four-manifolds with nowhere vanishing sectional
curvature is due to Reznikov (\cite{reznikov}).
	\end{rem}
	\section{Ricci parallel twistor spaces}
	In this section, we classify complete four-manifolds 
	with Ricci parallel twistor space.
	We recall a well-known result, which is a natural consequence
	of the De Rham decomposition Theorem (see, for instance, 
	\cite{boubelbergery, wu}):
	\begin{theorem} \label{splittingricpar}
		Let $(M,g)$ be a complete, simply connected, 
		Ricci parallel manifold of dimension $n$.
		Then, $M$ splits canonically into a Riemannian product of 
		Einstein manifolds. More precisely, there exist unique 
		$\lambda_1,...,\lambda_N\in\RR$, $(M_1,g_1),...,(M_N,g_N)$ 
		Einstein manifolds, such that $\ricc_i=\lambda_ig_i$ for
		every $i=1,...,N$, and an isometry 
		\[
		f:M\longrightarrow\Pi_{i=1}^N(M_i,g_i),
		\]
		which is unique up to composition with a product of isometries
		of the factors $M_i$.
	\end{theorem}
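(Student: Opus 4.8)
The plan is to deduce the statement directly from the De Rham decomposition theorem, the point being that $\nabla\ricc\equiv 0$ forces the eigenspaces of the Ricci tensor to be parallel distributions. First I would regard the Ricci tensor as a $g$-symmetric endomorphism field $\ricc:TM\to TM$; the hypothesis $\nabla\ricc=0$ says exactly that this field is parallel, i.e. it commutes with parallel transport. Since parallel transport along any curve is a linear isometry conjugating $\ricc_p$ to $\ricc_q$, the characteristic polynomial of $\ricc_p$ is independent of the point $p$, so by connectedness the distinct eigenvalues $\lambda_1,\dots,\lambda_N$ are global constants and the eigenspace multiplicities are constant as well.

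Next I would show that each eigenspace distribution $E_i:=\ker(\ricc-\lambda_i\,\id)$ is a smooth parallel distribution: parallelism of $\ricc$ means that parallel transport sends $\lambda_i$-eigenvectors to $\lambda_i$-eigenvectors, hence preserves $E_i$, and this is equivalent to $\nabla_X Y\in\Gamma(E_i)$ whenever $Y\in\Gamma(E_i)$. The $E_i$ are mutually orthogonal, being eigenspaces of a symmetric operator for distinct eigenvalues, and they satisfy $TM=E_1\oplus\cdots\oplus E_N$. A parallel distribution is in particular involutive with totally geodesic leaves, and its orthogonal complement is again parallel; this is precisely the situation in which the De Rham decomposition theorem applies.

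At this point I would invoke the De Rham theorem: since $(M,g)$ is complete and simply connected and $TM$ splits orthogonally into the parallel distributions $E_1,\dots,E_N$, one obtains an isometry $f:M\to\prod_{i=1}^N(M_i,g_i)$ onto a Riemannian product, where each $M_i$ is a (complete, simply connected) leaf of $E_i$ and $f_{\ast}$ carries $TM_i$ onto $E_i$. The remaining computational step is to identify each factor as Einstein: for a Riemannian product the curvature, and hence the Ricci tensor, is block-diagonal with respect to the factor decomposition, so $\ricc^{M_i}$ coincides with the restriction of $\ricc$ to $E_i$, which by construction equals $\lambda_i\,g_i$. Thus $(M_i,g_i)$ is Einstein with constant $\lambda_i$.

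For the uniqueness I would argue that the $\lambda_i$ are the distinct eigenvalues of $\ricc$, a pointwise metric invariant, hence intrinsically determined, while the factors $M_i$ are the integral leaves of the canonically defined eigenspace distributions; the residual freedom, namely reordering isometric factors and acting on each $M_i$ by its own isometry group, is exactly the ambiguity already built into the De Rham theorem. I expect the only delicate point to be the passage from the purely linear-algebraic splitting of $TM$ to the global product structure: this is where completeness and simple connectivity are essential (to exclude incomplete leaves and holonomy obstructions), and it is precisely the content one borrows from the De Rham theorem rather than reproving by hand.
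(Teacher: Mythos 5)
Your argument is correct and is exactly the route the paper has in mind: the paper does not prove Theorem \ref{splittingricpar} but cites it as ``a natural consequence of the De Rham decomposition Theorem,'' and your proof --- constancy of the spectrum of the parallel Ricci endomorphism, parallelism of the eigenspace distributions $E_i$, and then De Rham applied to the resulting orthogonal parallel splitting of $TM$, with block-diagonality of the Ricci tensor of a product identifying each factor as Einstein --- is the standard way to make that deduction precise. No discrepancy with the paper's treatment.
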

	\begin{rem}
		\begin{enumerate}
			\item If $M$ is not simply connected, Theorem \ref{splittingricpar}
			guarantees that $M$ splits locally as a product of Einstein
			manifolds;
			\item there exists a more general result, due to Eisenhart
			\cite{eisenhart}: if $(M,g)$ is a complete, simply connected
			Riemannian manifold admitting
			a symmetric, $(0,2)$-tensor field $\operatorname{T}$ 
			which is parallel with respect
			to the Levi-Civita connection, then either $\operatorname{T}$ is
			proportional to $g$ or $(M,g)$ splits isometrically as a product
			manifold. 
		\end{enumerate}
	\end{rem}
	\noindent
	By this splitting result, we can finally prove Theorem \ref{twistricpar},
	which generalizes the classification result for Einstein twistor
	spaces due to Friedrich and Grunewald (\cite{frigru}).
	\begin{proof}[Proof of Theorem \ref{twistricpar}]
		Let us assume that $(Z,g_t)$ is Ricci parallel but not Einstein.
		Recall that $Z$ is constructed as an associated bundle
		starting from the principal $SO(4)$-bundle $O(M)_-$ and
		that the metrics $g_t$ are such that the twistor projection
		is, in fact, a Riemannian submersion with totally geodesic fibers:
		a result due to Vilms tells us that $g_t$ is complete 
		for every $t>0$ (\cite{vilms}). Moreover, by definition
		of a fiber bundle we know that, for every $p\in M$, 
		there exists an open neighborhood $U_p$ of $p$ such that
		\begin{equation} \label{bundlediffeo}
			\pi^{-1}(U_p)\cong U_p\times F.
		\end{equation}
		Therefore, up to choosing $U'_p\subset U_p$, by Theorem
		\ref{splittingricpar} we know that 
		\eqref{bundlediffeo} is an isometry and it is unique, up to
		composition with a product of isometries of $U_p$ and 
		the fiber of $\pi$: moreover, we can also assume
		that $U_p$ is a local chart of $M$. 
		
		Let $\{e_q'\}_{q=1}^6$ be an orthonormal frame on 
		$U_p\times F$ such that $e_a'\in TU_p$ for $a=1,...,4$ and
		$e_5'$, $e_6'\in TF$: we choose a local orthonormal 
		$\{e_i\}_{i=1}^4$ on $U_p$ such that, if 
		$f:\pi^{-1}(U_p)\rightarrow U_p\times F$ 
		is the isometry given by Theorem \ref{splittingricpar},
		$f_{\ast}(\ol{e}_q)=e_q'$ for every $q$, where
		$\{\ol{e}_q\}_{q=1}^6$ is the local orthonormal frame
		on $\pi^{-1}(U_p)$ constructed as the dual of \eqref{twistcoframe},
		starting from $\{e_i\}_{i=1}^4$. Since $f$ is an isometry and
		the metric on $U_p\times F$ is the product metric, 
		if $(\theta')_r^q$ are the connection forms
		associated to $\{e_q'\}_{q=1}^6$, we have that
		\begin{align} \label{connformvanish}
			0&=(\theta')_5^a(e_p')e_a'=\theta_5^b(e_p)e_b\\
			0&=(\theta')_6^a(e_p')e_a'=\theta_6^b(e_p)e_b\notag
		\end{align}
		for every $p$ and for every $a,b=1,...,4$, 
		where $\theta_q^r$ are the Levi-Civita
		connection forms associated to $\{\ol{e}_q\}_{q=1}^6$: 
		by \eqref{twistconnform}, we immediately obtain that
		\[
		\qt{ab}=\qq{ab}=0.
		\]
		Now, we note that, if we rotate the frame $\{e_i\}_{i=1}^4$ by
		means of a smooth change of frames $a\in SO(4)$, 
		the vanishing condition \eqref{connformvanish} still holds
		for the new frame: 
		moreover, it can be easily shown that
		the equations $\qt{ab}=\qq{ab}=0$ can be
		rephrased as 
		\[
		A_{12}=A_{13}=A_{23}=A_{22}=A_{33}=B_{12}=B_{13}=B_{22}=B_{23}=B_{32}=B_{33}=0.
		\]
		Hence, if, for instance, we choose
		$a$ in such a way that
		\[
		a_+=\pmat{
			0 & -1 & 0\\
			1 & 0 & 0\\
			0 & 0 & 1
		}
		\quad\mbox{ and }\quad
		a_-=I_3,
		\]
		by \eqref{transflaw} we also obtain
		that the components of the new matrices $\tilde{A}$ and
		$\tilde{B}$ satisfy
		\[
		0=\tilde{A}_{22}=A_{11}, \quad 0=\tilde{B}_{12}=-B_{11}, \quad
		0=\tilde{B}_{22}=-B_{21}, \quad 0=\tilde{B}_{32}=-B_{31},
		\] 
		which implies that $A$ and $B$ are null matrices.
		Since this holds for some orthonormal frame on $U_p$, by
		\eqref{transflaw} it has
		to be true for every frame; repeating this argument for
		every point on $M$, we get that 
		$A=B=0$ on $M$, which, by Proposition \ref{charactmetric}, 
		means that $(M,g)$ is
		self-dual and Ricci-flat.
	\end{proof}
	As a consequence, we can classify complete, locally symmetric
	twistor spaces: indeed, by the argument used in \cite{cdmagag} and
	recalling that every Ricci-flat, homogeneous manifold is flat
	(\cite{alek}), 
	we can state the following
	\begin{cor}
		Let $(M,g)$ be a complete Riemannian four-manifold and 
		$(Z,g_t)$ be its twistor space. Then $(Z,g_t)$ is locally
		symmetric if and only if $(M,g)$ is the standard four-sphere
		$\SS^4$ or a flat manifold. 
	\end{cor}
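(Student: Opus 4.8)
The plan is to reduce everything to Theorem \ref{twistricpar}. Since a locally symmetric manifold satisfies $\nabla\riem\equiv0$, and hence $\nabla\ricc\equiv0$, the twistor space $(Z,g_t)$ is in particular Ricci parallel, so Theorem \ref{twistricpar} forces either $(Z,g_t)$ to be Einstein or $(M,g)$ to be self-dual and Ricci-flat. I would first clear the two converse implications. If $(M,g)$ is flat, then by \eqref{twistconnform} the forms $\theta^5_a,\theta^6_a$ vanish together with the curvature corrections in $\theta^a_b$, so the horizontal and vertical distributions are parallel and $(Z,g_t)$ is locally a Riemannian product of the flat base with a round fiber $F\cong\SS^2$; being a product of symmetric spaces, it is locally symmetric. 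If $(M,g)$ is the standard $\SS^4$, then for the value of $t$ singled out by Friedrich and Kurke \cite{frikur} the metric $g_t$ is the Fubini--Study metric on $\cp^3$, which is a Hermitian symmetric space. It then remains to treat the two cases of Theorem \ref{twistricpar} under the additional hypothesis of local symmetry.

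In the case where $(Z,g_t)$ is Einstein I would appeal to the classification of Friedrich and Grunewald \cite{frigru}: $(Z,g_t)$ is then $\cp^3$ or the flag manifold $F_{1,2}$, each endowed with its K\"{a}hler--Einstein or its strictly nearly K\"{a}hler structure. Among these, only the K\"{a}hler--Einstein $\cp^3$ is a Riemannian symmetric space; the flag manifold $F_{1,2}$ is not symmetric (the torus $T^2$ is not the fixed-point subgroup of any involution of $SU(3)$), and the strictly nearly K\"{a}hler metrics are $3$-symmetric but not symmetric, their curvature being non-parallel. Since $\cp^3$ with its symmetric metric is the twistor space of $\SS^4$ while $F_{1,2}$ is that of $\cp^2$ (cf. \cite{hitch2}), the base $M$ must be the standard four-sphere.

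In the case where $(M,g)$ is self-dual and Ricci-flat I would show that the local symmetry of $Z$ descends to $M$. Here $S=0$ and $\weyl^-\equiv0$, so by Proposition \ref{charactmetric} and \eqref{riemoperator} the curvature operator of $M$ collapses to its single block $A=\weyl^+$, with $B=C=0$; in particular $\riem=\weyl^+$. Substituting the connection forms \eqref{twistconnform} into the explicit formulas for the covariant derivative of the curvature of $(Z,g_t)$ derived in \cite{cdmagag}, the requirement $\nabla\riem\equiv0$ on $Z$ should force $\nabla\weyl^+\equiv0$ on $M$, i.e. $\nabla\riem\equiv0$ on $M$, so that $(M,g)$ is itself locally symmetric. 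Since $M$ is complete, its universal cover is then a globally symmetric, hence homogeneous, Ricci-flat four-manifold; recalling that every Ricci-flat homogeneous manifold is flat \cite{alek}, I conclude that $(M,g)$ is flat.

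The step I expect to be the main obstacle is precisely this descent in the self-dual Ricci-flat case, namely passing from $\nabla\riem\equiv0$ on $Z$ to $\nabla\weyl^+\equiv0$ on $M$. A soft argument does not suffice, since a Riemannian submersion with totally geodesic fibers need not have a locally symmetric base; one has to carry out the curvature computation of \cite{cdmagag} and use the rigidity of the self-dual Ricci-flat structure to eliminate the O'Neill integrability terms. A secondary, more bookkeeping-type difficulty is the precise identification, inside Friedrich and Grunewald's list, of which Einstein twistor spaces are genuinely Riemannian symmetric, which is what excludes $\cp^2$.
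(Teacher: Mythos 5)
Your overall reduction --- locally symmetric $\Rightarrow$ Ricci parallel, then Theorem \ref{twistricpar} splits the analysis into the Einstein case (handled by Friedrich--Grunewald's list \cite{frigru}, in which only the K\"{a}hler--Einstein $\cp^3$ is locally symmetric, forcing $M=\SS^4$) and the self-dual Ricci-flat case --- is the right skeleton; the two converse implications and the final appeal to \cite{alek} are also fine, and this is essentially the route the paper intends when it invokes the argument of \cite{cdmagag} together with the flatness of Ricci-flat homogeneous spaces.

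The gap is in the self-dual Ricci-flat case, where you leave the descent from $\nabla\riem\equiv 0$ on $Z$ to $\nabla\weyl^+\equiv 0$ on $M$ as something that ``should'' follow from the curvature formulas of \cite{cdmagag}, and you justify the detour by claiming that no soft argument is available because a Riemannian submersion with totally geodesic fibers need not have a locally symmetric base. That assessment is what misleads you: here the submersion is locally a product projection, so the descent is immediate. Indeed, self-dual and Ricci-flat means $A=B=0$ in \eqref{riemmatrix}, hence $\qt{ab}=\qq{ab}=0$, and the connection forms \eqref{twistconnform} collapse to $\theta^a_b=\omega^a_b$ and $\theta^5_a=\theta^6_a=0$; both distributions are then parallel, so by the de Rham argument (this is exactly Theorem \ref{horizintegr} combined with the totally geodesic fibers) $(Z,g_t)$ is locally the Riemannian product of $(M,g)$ with a round $\SS^2$. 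A Riemannian product is locally symmetric if and only if each factor is, so $(M,g)$ is locally symmetric, and a complete Ricci-flat locally symmetric four-manifold is flat (pass to the universal cover, which is a homogeneous Ricci-flat space, and apply \cite{alek}; alternatively, every irreducible de Rham factor of a symmetric space is Einstein with nonzero constant unless it is the Euclidean factor). No computation of $\nabla\riem$ on $Z$ is needed; conversely, since you do not actually carry out the computation you propose, the proof as written is incomplete precisely at its decisive step.
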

	\begin{rem}
		Recall that, by Friedrich-Grunewald and Hitchin's 
		classification result
		(\cite{frigru, hitch2}), if $M$ is also
			compact we conclude that $(Z,g_t)$ is Ricci parallel if and 
			only if $M$ is $\SS^4$, $\cp^2$, a flat torus or a K3 surface,
			up to quotients: in the last two cases, i.e. when 
			$Z$ is Ricci parallel but not Einstein, we also obtain
			that the twistor bundle is trivial (this holds
			in general for the twistor spaces of Ricci-flat, self-dual 
			four-manifolds, as observed in \cite{hitchinpoly}).
	\end{rem}
	As we recalled, the metrics $g_t$ in \eqref{metrictwist} 
	are constructed in such a way
	that the twistor projection becomes a Riemannian submersion
	with totally geodesic fibers. We might exploit
	the properties of Riemannian submersions in order to 
	provide an alternative proof of Theorem \ref{twistricpar}. 
	Let $\mathscr{H}$ be the \emph{horizontal distribution}
	of a submersion $\pi$: we know that $\mathscr{H}$
	is integrable if and only if the $(2,1)$-tensor field 
	$\operatorname{A}$, defined as (see \cite{oneill})
	\begin{equation} \label{tensorasub}
		A(X,Y)=\pi_\mathscr{H}\pa{\nabla_{\pi_\mathscr{H}X}(\pi_\mathscr{V}Y)}
		-\pi_\mathscr{V}\pa{\nabla_{\pi_\mathscr{H}X}(\pi_\mathscr{H}Y)}
	\end{equation}
	vanishes for every smooth vector field $X$ and $Y$, where
	$\pi_\mathscr{H}$ (respectively, $\pi_\mathscr{V}$) denotes
	the projection of vector fields onto the horizontal (respectively,
	onto the vertical) subbundle of $TM$. If we consider
	a Riemannian four-manifold and its twistor space, we
	easily obtain the following
	\begin{theorem} \label{horizintegr}
		Let $(M,g)$ be a Riemannian four-manifold and $(Z,g_t)$ its
		twistor space. Then the horizontal distribution $\mathscr{H}$
		of the submersion $\pi$ is integrable if and only if
		$(M,g)$ is self-dual and Ricci-flat. 
	\end{theorem}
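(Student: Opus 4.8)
The plan is to compute the O'Neill tensor $\operatorname{A}$ for the twistor submersion using the connection forms from \eqref{twistconnform}, evaluate it on the horizontal vector fields $\ol{e}_a$, and extract the integrability condition in terms of the curvature quantities $\qt{ab}$ and $\qq{ab}$. Since $\operatorname{A}$ measures the vertical part of $\nabla_{\ol{e}_a}\ol{e}_b$ for horizontal $\ol{e}_a,\ol{e}_b$, the key observation is that the vertical component of $\nabla_{\ol{e}_a}\ol{e}_b$ is governed precisely by the connection forms $\theta_a^5$ and $\theta_a^6$. By \eqref{twistconnform} these are $\theta_a^5=\frac{t}{2}\qt{ab}\theta^b$ and $\theta_a^6=\frac{t}{2}\qq{ab}\theta^b$, so the $\mathscr{V}$-valued part of $A(\ol{e}_a,\ol{e}_b)$ is exactly $\theta_b^5(\ol{e}_a)\ol{e}_5+\theta_b^6(\ol{e}_a)\ol{e}_6$ (up to the horizontal correction term, which vanishes here since the horizontal and vertical projections kill the other contributions).

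First I would recall that for a Riemannian submersion the tensor $\operatorname{A}$ restricted to horizontal arguments is alternating and coincides (up to a factor) with the obstruction to integrability of $\mathscr{H}$, as stated right before the theorem. Thus $\mathscr{H}$ is integrable if and only if $A(\ol{e}_a,\ol{e}_b)=0$ for all $a,b\in\{1,\dots,4\}$. Reading off the vertical components via \eqref{twistconnform}, this is equivalent to $\qt{ab}=\qq{ab}=0$ for all $a,b=1,\dots,4$. At this point the argument merges entirely with the one already carried out in the proof of Theorem \ref{twistricpar}: there it was shown that the system $\qt{ab}=\qq{ab}=0$ can be rephrased, using the transformation laws \eqref{transflaw} under the homomorphism $\mu$ of \eqref{homom} and suitable choices of $a_+\in SO(3)$, as the vanishing of all entries of the matrices $A$ and $B$ in \eqref{riemmatrix}.

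Having reduced to $A=B=0$ on $M$, I would invoke Proposition \ref{charactmetric}: $B=0$ at every point for every orthonormal frame is precisely the Einstein condition, while $A=(S/12)I_3$ is the self-duality condition, and the combination $A=0$ forces $S=0$ together with self-duality, i.e. $(M,g)$ is self-dual and Ricci-flat. Conversely, if $(M,g)$ is self-dual and Ricci-flat, then by Proposition \ref{charactmetric} we have $A=B=0$, hence $\qt{ab}=\qq{ab}=0$ identically and the connection forms $\theta_a^5,\theta_a^6$ vanish, so $A(\ol{e}_a,\ol{e}_b)=0$ and $\mathscr{H}$ is integrable.

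The main obstacle, and really the only nontrivial point, is the bookkeeping that converts the frame-dependent equations $\qt{ab}=\qq{ab}=0$ into the frame-independent matrix conditions $A=B=0$: one must check that the stated choices of $a_+$ in \eqref{transflaw} indeed kill the remaining diagonal-type components (the step $0=\tilde{A}_{22}=A_{11}$ and its analogues in the proof of Theorem \ref{twistricpar}). Since this computation is identical to the one already completed there, the present proof is essentially immediate once the O'Neill tensor has been identified with the vertical part of the connection, and I expect the bulk of the write-up to be a one-line reduction to the earlier argument rather than fresh calculation.
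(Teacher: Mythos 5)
Your proposal is correct and follows essentially the same route as the paper: both compute the O'Neill tensor componentwise from the connection forms \eqref{twistconnform}, identify its vanishing with $\qt{ab}=\qq{ab}=0$ for every frame, and then invoke the frame-rotation argument via \eqref{transflaw} together with Proposition \ref{charactmetric} to translate this into self-duality plus Ricci-flatness. The only cosmetic difference is that the paper records all components of $\operatorname{A}$ (including the mixed horizontal-vertical ones) explicitly, whereas you reduce directly to the horizontal-horizontal block, which suffices for integrability.
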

	\begin{proof}
		By \eqref{tensorasub}, we can find the local expression of the
		components of $\operatorname{A}$
		\begin{align*}
			A_{ab}^pe_p&=A(e_a,e_b)=-\pi_\mathscr{V}\pa{\nabla_{e_a}e_b}=
			-\pi_\mathscr{V}\pa{\theta_b^p(e_a)e_p}=-\theta_b^5(e_a)e_5
			-\theta_b^6(e_a)e_6\\
			A_{a5}^pe_p&=A(e_a,e_5)=\pi_\mathscr{H}\pa{\nabla_{e_a}e_5}=
			-\pi_\mathscr{H}\pa{\theta_5^p(e_a)e_p}=-\theta_5^b(e_a)e_b
			\notag\\
			A_{a6}^pe_p&=A(e_a,e_6)=\pi_\mathscr{H}\pa{\nabla_{e_a}e_6}=
			-\pi_\mathscr{H}\pa{\theta_6^p(e_a)e_p}=-\theta_6^b(e_a)e_b
			\notag\\
			A(e_5,e_a)&=A(e_6,e_a)=A(e_5,e_6)=A(e_6,e_5)=0, \notag
		\end{align*}
		which, by \eqref{twistconnform}, imply 
		\begin{align} \label{compasub}
			A_{ab}^5&=\dfrac{t}{2}\qt{ab}, \quad 
			A_{ab}^6=\dfrac{t}{2}\qt{ab}\\
			A_{a5}^b&=-\dfrac{t}{2}\qt{ab}, \quad
			A_{a6}^b=-\dfrac{t}{2}\qq{ab} \notag\\
			A_{ab}^c&=A_{5a}^p=A_{6a}^p=A_{56}^p=A_{65}^p=0, \notag
		\end{align}
		where $a,b,c=1,...,4$ and $p=1,...,6$. 
		It is clear that $\operatorname{A}\equiv0$ on $Z$ if and only if
		$\qt{ab}=\qq{ab}=0$ for every local orthonormal frame on $M$,
		which, by Proposition \ref{charactmetric} and
		\eqref{transflaw}, 
		holds if and only if $(M,g)$ is self-dual and Ricci-flat. 
	\end{proof}
	By means of Theorem \ref{horizintegr}, it is possible
	to provide an alternative proof for the classification of
	Ricci parallel twistor spaces.
	\begin{proof}[Second proof of Theorem \ref{twistricpar}]
		As before, let us assume that $(Z,g_t)$ is Ricci parallel,
		but not Einstein: moreover, up to passing to the universal
		covering, we may assume that $M$ is simply connected, which
		also guarantees that $Z$ is simply connected. By 
		Theorem \ref{splittingricpar}, $(Z,g_t)$ is isometric
		to a product manifold and, since $\pi$ is a Riemannian submersion,
		for every $q\in Z$, the horizontal subspace $\mathscr{H}_q$ 
		is isometric to $T_{\pi(q)}M$ \emph{via} $\pi_{\ast}$, hence implying
		that $Z$ is isometric to $M\times F$ and that $(M,g)$ is
		an Einstein manifold. Exploiting this
		argument, we can conclude that the isometry
		between $\mathscr{H}_q$ and $T_{\pi(q)}M$ and the
		fact that the twistor space globally splits
		isometrically as the product $M\times F$, where
		the metric on the factor $M$ is exactly $g$, 
		guarantee that the horizontal distribution is integrable. 
		By Theorem \ref{horizintegr}, 
		the claim is proven. 
	\end{proof}
	We conclude by summarizing the results
	concerning Ricci parallel twistor spaces as follows:
	\begin{theorem}
		Let $(M,g)$ be a complete Riemannian four-manifold and
		$(Z,g_t)$ be its twistor space. Then, if
		$(Z,g_t)$ is not an Einstein manifold, the following
		conditions are equivalent:
		\begin{enumerate}
			\item $(Z,g_t)$ is a Ricci parallel manifold;
			\item the horizontal distribution $\mathscr{H}$ of
			the Riemannian submersion $\pi$ is integrable;
			\item $(M,g)$ is a Ricci-flat, self-dual (and then,
			locally hyperk\"{a}hler) manifold. 
		\end{enumerate}
	In particular, if $M$ is compact, $(Z,g_t)$ is Ricci parallel
	if and only if, up to quotients, $(M,g)$ is $\SS^4$, $\cp^2$, a 
	flat torus or a K3 surface. 
	\end{theorem}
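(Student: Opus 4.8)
The plan is to observe that the three-way equivalence is an immediate assembly of the two results already established in this section, Theorem \ref{twistricpar} and Theorem \ref{horizintegr}, while the compact classification requires splitting into the Einstein and non-Einstein regimes and invoking the known classifications.

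First I would establish (2) $\Leftrightarrow$ (3). This is precisely the content of Theorem \ref{horizintegr}: the horizontal distribution $\mathscr{H}$ of the submersion $\pi$ is integrable if and only if $(M,g)$ is self-dual and Ricci-flat. I would stress that this equivalence holds unconditionally, without appealing to the standing hypothesis that $(Z,g_t)$ is not Einstein; the parenthetical remark that such a manifold is locally hyperk\"{a}hler is the standard consequence of combining self-duality with the Ricci-flat condition.

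Next I would establish (1) $\Leftrightarrow$ (3) under the assumption that $(Z,g_t)$ is not Einstein. By Theorem \ref{twistricpar}, $(Z,g_t)$ is Ricci parallel if and only if either $(Z,g_t)$ is Einstein or $(M,g)$ is self-dual and Ricci-flat. Since the Einstein alternative is excluded by hypothesis, Ricci parallelism of $(Z,g_t)$ forces $(M,g)$ to be self-dual and Ricci-flat; conversely, if $(M,g)$ is self-dual and Ricci-flat, the same theorem guarantees that $(Z,g_t)$ is Ricci parallel. Chaining this with the previous step yields the equivalence of all three conditions.

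Finally, for the compact classification I would drop the non-Einstein hypothesis and argue directly from Theorem \ref{twistricpar}, splitting into two cases. If $(Z,g_t)$ is Einstein, the classification of Friedrich and Grunewald together with Hitchin's theorem (\cite{frigru, hitch2}) forces $(M,g)$ to be, up to quotients, $\SS^4$ or $\cp^2$. If instead $(Z,g_t)$ is Ricci parallel but not Einstein, then by the equivalence just proven $(M,g)$ is compact, self-dual and Ricci-flat, and the classification of such four-manifolds (\cite{frikur, hitch}) shows that $(M,g)$ is, up to quotients, a flat torus or a K3 surface; taking the union of the two lists gives the stated conclusion. I expect the only delicate point to be the bookkeeping in this last step: the three-way equivalence is phrased only in the non-Einstein setting, so one must take care that the compact statement simultaneously accounts for the Einstein regime (yielding $\SS^4$ and $\cp^2$) and the non-Einstein regime (yielding the flat torus and K3 surface).
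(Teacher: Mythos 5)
Your proposal is correct and matches the paper's intent exactly: this theorem is stated as a summary, and its justification is precisely the assembly you describe --- Theorem \ref{horizintegr} for (2)$\Leftrightarrow$(3), Theorem \ref{twistricpar} with the non-Einstein hypothesis for (1)$\Leftrightarrow$(3), and the Friedrich--Grunewald/Hitchin and Hitchin/Friedrich--Kurke classifications for the two regimes of the compact case, just as in the paper's preceding remark.
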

	
	\section*{Acknowledgements}
	The author 
	would like to kindly thank Professor Paolo Mastrolia (Università degli
	Studi di Milano) and
	Professor Giovanni Catino (Politecnico di Milano) for their helpful suggestions and comments
	about the initial draft of this paper. Also, the author would like
	to express his gratitude to Professor Gérard Besson (Institut Fourier, 
	Université de Grenoble) for the numerous
	conversations concerning
	the topic of this paper and for his useful advice. The author is a member
	of the Gruppo Nazionale per le Strutture Algebriche, Geometriche e loro
	Applicazioni (GNSAGA) of INdAM (Istituto Nazionale di Alta Matematica).
	
	\bibliographystyle{abbrv}
	\bibliography{Seccurvbiblio}
\end{document}